\newtheorem{theorem}{Theorem}[section]
\newtheorem{lemma}[theorem]{Lemma}
\theoremstyle{definition}
\newtheorem{proposition}[theorem]{Proposition}
\theoremstyle{remark}
\theoremstyle{notation}
\newtheorem{notation}[theorem]{Notation}
\numberwithin{equation}{section}
\theoremstyle{corollary}
\newtheorem{conjecture}[theorem]{Conjecture}
\begin{document}

\title[]{A remark on the Farrell-Jones conjecture}

%    Information for first author
\author[]{Ilias Amrani}
%    Address of record for the research reported here
\address{Department of Mathematics and Information Technology, Academic University of Saint Petersburg, Russian Federation.}

\address{Faculty of Mathematics and Mechanics, Saint Petersburg State University, Russian Federation.}
%\curraddr{ }
%    Current address
%\curraddr{Department of Mathematics and Statistics,
%Case Western Reserve University, Cleveland, Ohio 43403}
\email{ilias.amranifedotov@gmail.com}
%\email{amrani@math.muni.cz}
   %\thanks %will become a 1st page footnote.

%    Information for second author
%\author{Author Two}
%\address{Mathematical Research Section, School of Mathematical Sciences,
%Australian National University, Canberra ACT 2601, Australia}
%\email{two@maths.univ.edu.au}
%\thanks{}

%    General info
\subjclass[2000]{19D50, 55P47, 55N20, 55P20, 18F25, 18E30.}

%\date{November 26, 2012 }
%\dedicatory{Dedicated to my Wife}
%\dedicatory{}

\keywords{Algebraic $K$-theory, Waldhausen $K$-theory, Farrell-Jones Conjecture, Group algebra.}

\begin{abstract}
Assuming the classical Farrell-Jones conjecture we produce an explicit (commutative) group ring $R$ and a thick subcategory $\mathsf{C}$ of perfect $R$-complexes such that the Waldhausen $K$-theory space $\mathrm{K}(\mathsf{C})$ is equivalent to a rational Eilenberg-Maclane space. 
\end{abstract}

\maketitle
\section{Introduction}
Our main goal is to prove the following theorem
\begin{theorem}[Main result]\label{maint}\ref{theorem1}
There exists a commutative ring $R$ and a thick subcategory $\mathsf{C}$ of $\mathsf{Perf}(R)$ such that the space $\mathrm{K}(\mathsf{C})$ of Waldhausen $K$-theory is equivalent to an Eilenberg-MacLane space. 
\end{theorem}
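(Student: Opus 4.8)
The plan is to take $R=\mathbb{F}_q[\mathbb{Q}]$, the group ring of the additive group of the rationals over a finite field $\mathbb{F}_q$, which is a commutative group ring, and to obtain $\mathsf{C}$ as the thick subcategory of $\mathsf{Perf}(R)$ cut out by the augmentation. First I would feed $R$ into the assumed Farrell--Jones conjecture. Since $\mathbb{Q}$ is torsion-free and $\mathbb{F}_q$ is regular, all Bass and Farrell Nil-terms and all $\mathrm{UNil}$-terms vanish and the family of virtually cyclic subgroups may be replaced by the trivial family; moreover $\mathbb{Q}=\mathrm{colim}_n\frac{1}{n!}\mathbb{Z}$ is a directed union of copies of $\mathbb{Z}$, for which Farrell--Jones is classical, and the conjecture is inherited by directed unions. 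Hence the assembly map is an equivalence and identifies the connective spectrum $\mathbf{K}(R)$ with the $\mathbf{K}(\mathbb{F}_q)$-homology of $B\mathbb{Q}=K(\mathbb{Q},1)$, that is $\mathbf{K}(\mathbb{F}_q[\mathbb{Q}])\simeq B\mathbb{Q}_+\wedge\mathbf{K}(\mathbb{F}_q)$.

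The key point, and the one that upgrades the conclusion from a statement about the rationalization to a statement about the actual space, is that the rationality here is forced by the \emph{integral} homology of the group, not imposed afterwards. Indeed $\mathbb{Q}$ is a torsion-free abelian group of rank one, so $H_*(B\mathbb{Q};\mathbb{Z})\cong\Lambda^*_{\mathbb{Z}}(\mathbb{Q})$ is $\mathbb{Z}$ in degree $0$, $\mathbb{Q}$ in degree $1$, and $0$ in all higher degrees; equivalently $B\mathbb{Q}$ is the Moore space $M(\mathbb{Q},1)$. The augmentation $\epsilon\colon R\to\mathbb{F}_q$ is split by the unit, so $\mathbf{K}(R)\simeq\mathbf{K}(\mathbb{F}_q)\vee\mathbf{K}^{\mathrm{red}}$ with $\mathbf{K}^{\mathrm{red}}=\mathrm{fib}(\epsilon_*)\simeq B\mathbb{Q}\wedge\mathbf{K}(\mathbb{F}_q)$ (reduced smash). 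Running the Atiyah--Hirzebruch spectral sequence $E^2_{p,q}=\widetilde{H}_p(B\mathbb{Q};K_q(\mathbb{F}_q))\Rightarrow\pi_{p+q}\mathbf{K}^{\mathrm{red}}$, only the line $p=1$ survives, giving $\pi_n\mathbf{K}^{\mathrm{red}}=\mathbb{Q}\otimes_{\mathbb{Z}}K_{n-1}(\mathbb{F}_q)$. By Quillen's computation $K_0(\mathbb{F}_q)=\mathbb{Z}$ while $K_m(\mathbb{F}_q)$ is finite for all $m>0$; tensoring with $\mathbb{Q}$ therefore annihilates every positive-degree $K$-group of $\mathbb{F}_q$ and leaves only $\mathbb{Q}\otimes\mathbb{Z}=\mathbb{Q}$ in total degree $1$. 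Thus $\mathbf{K}^{\mathrm{red}}\simeq\Sigma H\mathbb{Q}$ \emph{on the nose}, integrally, before any rationalization: the finite torsion has already disappeared against the rational class in $H_1(B\mathbb{Q})$.

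It remains to realize $\mathbf{K}^{\mathrm{red}}$ as the Waldhausen $K$-theory of an honest thick subcategory $\mathsf{C}\subseteq\mathsf{Perf}(R)$. Here I would use the Bass--Heller--Swan localization sequences along the tower $\mathbb{Z}\subset\frac{1}{2!}\mathbb{Z}\subset\cdots\subset\mathbb{Q}$, i.e. $R=\mathrm{colim}_n\,\mathbb{F}_q[u_n,u_n^{-1}]$ with $u_n=u_{n+1}^{\,n+1}$, corresponding on spectra to the tower of degree-$(n+1)$ covers $\mathbb{G}_m\xrightarrow{x\mapsto x^{n+1}}\mathbb{G}_m$. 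At each finite stage the reduced Bass--Heller--Swan summand is $\Sigma\mathbf{K}(\mathbb{F}_q)$ and is produced by a Thomason--Trobaugh localization/support sequence on $\mathbb{P}^1_{\mathbb{F}_q}$, hence as the $K$-theory of a thick subcategory $\mathsf{C}_n\subseteq\mathsf{Perf}(\mathbb{F}_q[u_n,u_n^{-1}])$; the transition functors multiply the fundamental class by $n+1$. Taking $\mathsf{C}=\mathrm{colim}_n\,\mathsf{C}_n$ inside $\mathsf{Perf}(R)$ (using that both $\mathsf{Perf}$ and $\mathrm{K}$ commute with filtered colimits of rings) gives $\mathbf{K}(\mathsf{C})\simeq\mathrm{colim}_n\Sigma\mathbf{K}(\mathbb{F}_q)\simeq\Sigma H\mathbb{Q}$, the transition maps by all integers forcing exactly the rationalization. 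Since $\mathbf{K}(\mathsf{C})$ is connective with $\pi_1=\mathbb{Q}$ and all other homotopy zero, the space $\mathrm{K}(\mathsf{C})=\Omega^\infty\mathbf{K}(\mathsf{C})$ is $K(\mathbb{Q},1)$, a rational Eilenberg--MacLane space, and in particular an Eilenberg--MacLane space, as required.

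The main obstacle I expect is precisely this last identification: showing that the reduced summand, which a priori appears only as the fibre of the split map $\epsilon_*$ (not as a support subcategory, since the augmentation point of $\mathrm{Spec}\,\mathbb{F}_q[\mathbb{Q}]$ is an increasing union of principal ideals and $\mathbb{F}_q$ is not a perfect $R$-module), is genuinely the $K$-theory of a thick subcategory. I would resolve this by working at finite levels, where the suspension in $\Sigma\mathbf{K}(\mathbb{F}_q)$ is absorbed by the localization sequence of a regular divisor on $\mathbb{G}_m\subset\mathbb{P}^1$, and only then passing to the colimit; verifying compatibility of the chosen $\mathsf{C}_n$ with the non-support-preserving cover maps, and hence the existence of a well-defined colimit subcategory $\mathsf{C}$ with the asserted $K$-theory, is the technical heart of the argument.
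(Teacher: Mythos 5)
Your first two steps --- the Farrell--Jones identification $\mathbf{K}(R)\simeq B\mathbb{Q}_+\wedge\mathbf{K}(\mathbb{F}_q)$ and the Atiyah--Hirzebruch/Quillen computation showing the reduced summand is $\Sigma H\mathbb{Q}$ --- coincide with the paper's Lemma \ref{F-J}, and are correct. The gap is in the step you yourself call the technical heart: realizing $\Sigma\mathbf{K}(\mathbb{F}_q)$ as the $K$-theory of a thick subcategory $\mathsf{C}_n\subseteq\mathsf{Perf}(\mathbb{F}_q[u_n,u_n^{-1}])$ at each finite level. This is not just hard; it is impossible. The ring $\mathbb{F}_q[u,u^{-1}]$ is commutative, Noetherian and regular, so by the Hopkins--Neeman classification every thick subcategory of $\mathsf{Perf}(\mathbb{F}_q[u,u^{-1}])$ other than $0$ and the whole category is the subcategory of complexes supported on a set $S$ of closed points, and by d\'evissage its $K$-theory is $\bigoplus_{x\in S}\mathbf{K}(\kappa(x))$ --- a connective spectrum whose $\pi_0$ is a nonzero free abelian group. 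No such spectrum is $\Sigma\mathbf{K}(\mathbb{F}_q)$, which has $\pi_0=0$ and $\pi_1=\mathbb{Z}$. In the localization sequence $\mathbf{K}(\mathbb{A}^1\ \mathrm{on}\ \{0\})\to\mathbf{K}(\mathbb{A}^1)\to\mathbf{K}(\mathbb{G}_m)$ the Bass--Heller--Swan suspension summand arises as a \emph{cofibre}, i.e.\ on the quotient side of the sequence, never as the $K$-theory of a subcategory of $\mathsf{Perf}(\mathbb{G}_m)$; d\'evissage is precisely the obstruction to "absorbing the suspension" at any finite stage, so there are no categories $\mathsf{C}_n$ to feed into your colimit.

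The paper's route goes directly through the subcategory you dismissed as problematic: $\mathsf{C}=\mathsf{Perf}(R,k)=\{M\in\mathsf{Perf}(R):M\otimes^{\mathbb{L}}_{R}k\simeq 0\}$, where $k\to R\to k$ is the augmentation. The point is that the obstruction above lives only at finite levels: what makes the infinite level work is the failure of d\'evissage there, quantified by the Tor-unitality condition $k\otimes^{\mathbb{L}}_{R}k\simeq k$. Indeed $\mathrm{Tor}^{R}_{*}(k,k)=\mathrm{H}_{*}(\mathrm{B}\mathbb{Q};k)$ is $k$ concentrated in degree $0$ because $\mathbb{Q}$ is uniquely divisible and $k$ is finite (Proposition \ref{prop}), whereas at every finite stage $\mathrm{Tor}^{\mathbb{F}_q[u^{\pm 1}]}_{1}(k,k)=k\neq 0$; the condition you need appears only in the colimit. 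Granting it, the paper's Lemma \ref{main} (Bousfield-localize $\mathsf{Mod}_R$ at the $k\otimes^{\mathbb{L}}_R-$-equivalences, identify the localized category with $\mathsf{Mod}_k$ up to Quillen equivalence, and apply Waldhausen's fibration theorem plus invariance of $K$-theory) produces the fibre sequence
$$\mathrm{K}(\mathsf{Perf}(R,k))\to\mathrm{K}(R)\to\mathrm{K}(k),$$
after which your homotopy-group computation applies verbatim and identifies $\mathrm{K}(\mathsf{Perf}(R,k))$ with $K(\mathbb{Q},1)$. So the fix is not a finite-level approximation followed by a colimit, but a localization theorem for Waldhausen $K$-theory applied once, at infinity, where the augmentation becomes Tor-unital.
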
 
In our opinion this theorem seems counterintuitive at the first glance. There is very few examples of rings for which the algebraic $K$-theory groups were computed in all degrees (e.g. the $K$-theory of finite fields computed by Quillen). Another source for such computation is the Farrell-Jones conjecture. We will compute explicitly the $K$-groups for some particular (commutative) group ring \ref{F-J}.
\begin{conjecture}[Classical Farrell-Jones \cite{luck2010k}]\label{1}
For any regular ring $k$ and any torsionfree group $G$, the assembly map 
$$\mathrm{H}_{n}(\mathrm{B}G; \mathbf{K}(k))\longrightarrow \mathrm{K}_{n}(k[G]) $$
is an isomorphism for any $n\in \mathds{Z}$. 
\end{conjecture}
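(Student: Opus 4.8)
The plan is to identify the displayed map as the Davis--Lück assembly map and to reduce the statement to the general Isomorphism Conjecture. Writing $\mathbf{K}(k)$ for the (nonconnective) $K$-theory spectrum, one has a $G$-equivariant homology theory $\mathrm{H}^G_*(-;\mathbf{K}_k)$ built from $k$ with $\mathrm{H}_n(\mathrm{B}G;\mathbf{K}(k)) \cong \mathrm{H}_n^G(\mathrm{E}G;\mathbf{K}_k)$ and $\mathrm{K}_n(k[G]) \cong \mathrm{H}_n^G(\mathrm{pt};\mathbf{K}_k)$. Thus the map in question is precisely the assembly map relative to the family $\mathcal{TR}$ of trivial subgroups. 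Since the Isomorphism Conjecture is naturally phrased for the family $\mathcal{VC}$ of virtually cyclic subgroups, the first step is to establish that version and then to compare families.

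The family comparison is where the two hypotheses enter. Torsion-freeness of $G$ forces $\mathcal{FIN}=\mathcal{TR}$ and makes every nontrivial virtually cyclic subgroup infinite cyclic; regularity of $k$ makes the Nil-terms $NK_*(k)$ vanish, so the Bass--Heller--Swan theorem gives $\mathrm{K}_n(k[t,t^{-1}]) \cong \mathrm{K}_n(k)\oplus \mathrm{K}_{n-1}(k)$. A relative-assembly (transitivity) argument then shows that the comparison map
\[
\mathrm{H}_n^G(\mathrm{E}_{\mathcal{TR}}G;\mathbf{K}_k)\longrightarrow \mathrm{H}_n^G(\mathrm{E}_{\mathcal{VC}}G;\mathbf{K}_k)
\]
is an isomorphism, reducing the Isomorphism Conjecture for $\mathcal{VC}$ to the stated classical form for $\mathcal{TR}$.

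It remains to prove the Isomorphism Conjecture for $\mathcal{VC}$. Split injectivity I would obtain by a transfer argument in controlled algebra: model $\mathrm{K}(k[G])$ by geometric $k$-modules with control over a metric space carrying a proper $G$-action, build an equivariant transfer, and verify that the composite with assembly is an equivalence (finite asymptotic dimension already suffices for this half). Surjectivity is the genuinely hard part, and is the main obstacle: one must contract every class to arbitrarily small control. Following Bartels--Lück--Reich, this is accomplished by constructing long, thin, $G$-equivariant open covers of a flow space associated with $G$ and using them to push the control down. The existence of such covers rests essentially on the large-scale geometry of $G$ (a hyperbolic or $\mathrm{CAT}(0)$ structure, or more generally finite $\mathcal{F}$-amenability of a suitable flow space), and this is exactly the step that remains out of reach for a completely arbitrary torsion-free group. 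Consequently an unconditional proof is available only after restricting $G$ to such a geometrically controlled class; the present paper avoids this difficulty by adopting the statement as a standing hypothesis and applying the resulting isomorphism to one explicit commutative group ring.
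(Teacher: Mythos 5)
The statement you were asked to prove is labelled a \emph{conjecture} in the paper, and the paper offers no proof of it: it is adopted as a standing hypothesis, and the only input actually used is the special case $G=\mathds{Q}$, $k=\mathds{F}_{2}$, which the paper attributes to \cite{wegner2015farrell} (Farrell--Jones for solvable, in particular torsionfree abelian, groups). So there is no ``paper's own proof'' to compare against. Your write-up is an accurate survey of the standard strategy --- identifying the map as the Davis--L\"uck assembly map for the trivial family, using torsion-freeness to collapse $\mathcal{FIN}$ to the trivial family and regularity to kill the Nil-terms in the Bass--Heller--Swan decomposition so that the $\mathcal{VC}$- and $\mathcal{TR}$-versions agree, and then invoking controlled algebra, transfers, and flow-space covers \`a la Bartels--L\"uck--Reich for the $\mathcal{VC}$-version itself.

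The genuine gap is the one you name yourself: the surjectivity (and in full generality even the construction of the required equivariant covers) is only known for groups with specific large-scale geometry, not for an arbitrary torsionfree group, so the argument cannot be completed. This is not an error on your part --- the statement is an open problem --- but it does mean your text is a proof outline conditional on the hard step rather than a proof. One substantive point worth adding: for the purposes of this paper the conjecture is \emph{not} needed in its open generality. Everything downstream (Lemma \ref{F-J} and Theorem \ref{theorem1}) uses only $G=\mathds{Q}$ with $k=\mathds{F}_{2}$ regular, and that instance is an unconditional theorem; your reduction via torsion-freeness and regularity, combined with the known case of abelian (or more generally solvable) groups, already yields everything the paper requires without any hypothesis.
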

We refer to \cite{waldhausen1985algebraic} for the definition of the $K$-theory spectrum $\mathbf{K}(k)$ of a ring $k.$ We recall that $\mathrm{B}G$ is the classifying space of the group $G$ and that $k[G]$ is the associated group ring with a natural augmentation $k[G]\rightarrow k$. 
We recall also that $\mathrm{H}_{n}(\mathrm{B}G; \mathbf{K}(k))$ is the same thing as the $n$-th stable homotopy group of the spectrum $\mathrm{B}G_{+}\wedge \mathbf{K}(k)$. More precisely the assembly map is induced by the following map of spectra  
$$\mathrm{B}G_{+}\wedge \mathbf{K}(k)\rightarrow \mathbf{K}(k[G]). $$
The conjecture \ref{1} admits a positive answer in the case where $k$ is regular ring and $G$ is a torsionfree abelian group, it is a particular case of the main result of \cite{wegner2015farrell}.

\section{Fibre sequence for Waldhausen K-theory }
\begin{notation}\label{notaion} We fix the following notations:
\begin{enumerate}
\item Let $\mathcal{E}$ be any (differential graded) ring. Let $ \mathsf{Mod}_{\mathcal{E}}$ denotes the (differential graded) model category of $\mathcal{E}$-complexes \cite{hovey2007model}. And $\mathsf{Perf}(\mathcal{E})$ denotes the (differential graded) category of perfect (i.e. compact) $\mathcal{E}$-complexes.
\item For any (differential graded) ring map $\mathcal{E}\rightarrow \mathcal{A}$, $\mathsf{Perf}(\mathcal{E},\mathcal{A})$ denotes the thick subcategory of $\mathsf{Perf}(\mathcal{E})$ such that $M\in \mathsf{Perf}(\mathcal{E},\mathcal{A})$ if and only if $M\otimes^{\mathbb{L}}_{\mathcal{E}}\mathcal{A}\simeq 0$ i.e.  $M\otimes^{\mathbb{L}}_{\mathcal{E}}\mathcal{A}$ is quasi-isomorphic to $0$. By the symbol $\otimes^{\mathbb{L}}_{\mathcal{E}}$ we do mean the derived tensor product over $\mathcal{E}$.
\end{enumerate}
\end{notation}
\begin{lemma}\label{main}
Let $\mathcal{E}\rightarrow \mathcal{A}$ be a morphism of (differential graded) rings such that $\mathcal{A}\otimes^{\mathbb{L}}_{\mathcal{E}}\mathcal{A}\simeq \mathcal{A}$, then
$$ \mathrm{K}(\mathcal{E},\mathcal{A})\rightarrow \mathrm{K}(\mathcal{E})\rightarrow  \mathrm{K}(\mathcal{A})$$
is a fibre sequence of (infinite loop) spaces where $\mathrm{K}(\mathcal{E},\mathcal{A}):= \mathrm{K}(\mathsf{Perf}(\mathcal{E},\mathcal{A}))$.
\end{lemma}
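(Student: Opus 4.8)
The plan is to realize the displayed sequence as the $K$-theory of a Verdier localization of (stable, equivalently differential graded) categories and then to invoke the localization theorem for Waldhausen $K$-theory. Write $L := -\otimes^{\mathbb{L}}_{\mathcal{E}}\mathcal{A}\colon D(\mathcal{E})\to D(\mathcal{A})$ for the derived base-change functor between the unbounded derived categories, with right adjoint given by restriction of scalars $R\colon D(\mathcal{A})\to D(\mathcal{E})$. The first step is to read the hypothesis $\mathcal{A}\otimes^{\mathbb{L}}_{\mathcal{E}}\mathcal{A}\simeq\mathcal{A}$ as the assertion that the multiplication map is an equivalence, which is precisely the condition that $\mathcal{E}\to\mathcal{A}$ is a \emph{homological epimorphism}. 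Equivalently, the counit $LR\to\mathrm{id}_{D(\mathcal{A})}$ is an equivalence, so that $R$ is fully faithful and $L$ exhibits $D(\mathcal{A})$ as a Bousfield localization of $D(\mathcal{E})$. Since $L$ is a tensor functor it preserves coproducts, so this localization is automatically smashing.

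Next I would identify the kernel of the localization. By construction $\ker(L)=\{M\in D(\mathcal{E}) : M\otimes^{\mathbb{L}}_{\mathcal{E}}\mathcal{A}\simeq 0\}$ is a localizing subcategory whose compact objects are exactly $\mathsf{Perf}(\mathcal{E})\cap\ker(L)=\mathsf{Perf}(\mathcal{E},\mathcal{A})$ of Notation \ref{notaion}. Granting that $\ker(L)$ is generated as a localizing subcategory by these compact objects, one applies the Thomason--Neeman localization theorem to the compactly generated sequence $\ker(L)\to D(\mathcal{E})\xrightarrow{L}D(\mathcal{A})$. This shows that on compact objects the induced functor $\mathsf{Perf}(\mathcal{E})/\mathsf{Perf}(\mathcal{E},\mathcal{A})\to\mathsf{Perf}(\mathcal{A})$ is fully faithful and identifies $\mathsf{Perf}(\mathcal{A})$ with the idempotent completion of the Verdier quotient.

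Finally I would feed the resulting exact sequence of small stable categories $\mathsf{Perf}(\mathcal{E},\mathcal{A})\to\mathsf{Perf}(\mathcal{E})\to\mathsf{Perf}(\mathcal{A})$ into the localization theorem for Waldhausen $K$-theory \cite{waldhausen1985algebraic}, which converts such a sequence into a fibre sequence of $K$-theory spectra. As $\mathbf{K}$ takes values in infinite loop spaces, this produces the asserted fibre sequence $\mathrm{K}(\mathcal{E},\mathcal{A})\to\mathrm{K}(\mathcal{E})\to\mathrm{K}(\mathcal{A})$.

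The step I expect to be the main obstacle is the verification that $\ker(L)$ is generated by compact objects lying in $\mathsf{Perf}(\mathcal{E},\mathcal{A})$, a telescope-type condition that is genuinely needed for Neeman's theorem and that does not follow formally from smashing alone; the point is that the generators are cofibres of Koszul-type maps in $\mathsf{Perf}(\mathcal{E})$ rather than $\mathcal{A}$ itself, which need not be perfect over $\mathcal{E}$. A secondary subtlety is the discrepancy between the Verdier quotient $\mathsf{Perf}(\mathcal{E})/\mathsf{Perf}(\mathcal{E},\mathcal{A})$ and $\mathsf{Perf}(\mathcal{A})$: the former need not be idempotent complete, so the two agree only after Karoubi completion. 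This is the cofinality phenomenon affecting only $\pi_0$; to obtain the fibre sequence on the nose one either passes to nonconnective $K$-theory, where Thomason--Neeman gives the statement directly, or invokes Thomason's cofinality theorem together with the explicit description of $K_0$ to confirm that the completion does not disturb the homotopy fibre.
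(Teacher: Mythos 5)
Your route---reading the hypothesis as saying that $\mathcal{E}\rightarrow\mathcal{A}$ is a homological epimorphism, realizing $D(\mathcal{A})$ as a smashing Bousfield localization of $D(\mathcal{E})$, and applying Thomason--Neeman to the compact objects---is genuinely different from the paper's, which stays inside the model category $\mathsf{Mod}_{\mathcal{E}}$, applies Waldhausen's fibration theorem to $(\mathsf{Perf}(\mathcal{E})^{\mathbf{w}},\mathbf{q.i.})\rightarrow(\mathsf{Perf}(\mathcal{E}),\mathbf{q.i.})\rightarrow(\mathsf{Perf}(\mathcal{E}),\mathbf{w})$, and then identifies $\mathrm{K}((\mathsf{Perf}(\mathcal{E}),\mathbf{w}))$ with $\mathrm{K}(\mathcal{A})$ via a Quillen equivalence and Sagave's comparison. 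But your proof has a genuine gap, namely the step you flag and then ``grant'': that $\ker(L)$ is generated as a localizing subcategory by $\mathsf{Perf}(\mathcal{E},\mathcal{A})$. Under the stated hypotheses this is not an unverified technicality --- it is false in general, and when it fails the conclusion of the lemma fails with it. Take for $\mathcal{E}$ a valuation ring $V$ with value group $\mathds{Q}$, say $V=\bigcup_{n}\mathds{F}_{2}[[t^{1/n!}]]$, and for $\mathcal{A}$ its residue field $k=V/\mathfrak{m}$. Then $\mathfrak{m}^{2}=\mathfrak{m}$ and $\mathfrak{m}$ is flat, so $\mathrm{Tor}^{V}_{i}(k,k)=0$ for $i>0$ and the hypothesis $k\otimes^{\mathbb{L}}_{V}k\simeq k$ holds; but a perfect complex over the local ring $V$ with vanishing derived fibre at $\mathfrak{m}$ is contractible (derived Nakayama), so $\mathsf{Perf}(V,k)=0$, while $\mathrm{K}_{1}(V)=V^{\times}\rightarrow k^{\times}=\mathrm{K}_{1}(k)$ has kernel $1+\mathfrak{m}\neq 1$. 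Hence $\ast\rightarrow\mathrm{K}(V)\rightarrow\mathrm{K}(k)$ is not a fibre sequence. This is a smashing localization whose kernel contains no nonzero compact object, i.e.\ a failure of the telescope conjecture of exactly the kind constructed by Keller, so no argument can close your gap without adding a hypothesis to the lemma.

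You should also be aware that the paper's own proof does not escape this: Waldhausen's fibration theorem gives the fibre sequence with base $\mathrm{K}((\mathsf{Perf}(\mathcal{E}),\mathbf{w}))$ unconditionally, but $\mathsf{Perf}(\mathcal{E})[\mathbf{w}^{-1}]$ is the Verdier quotient $\mathsf{Perf}(\mathcal{E})/\mathsf{Perf}(\mathcal{E},\mathcal{A})$, and its comparison with $\mathsf{Perf}(\mathcal{A})$ is precisely your telescope-type condition; in the example above that step would assert $\mathrm{K}(V)\simeq\mathrm{K}(k)$, which is false. So your diagnosis of where the real content sits is accurate, and indeed sharper than the paper's treatment; moreover the difficulty is not avoided in the intended application, since $\mathds{F}_{2}[\mathds{Q}]$ localized at the augmentation ideal is itself a valuation ring with idempotent maximal ideal, and the flat idempotent ideal $I$ gives a nonzero object of $\ker(L)$ not lying in the localizing subcategory generated by $\mathsf{Perf}(R,k)$. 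Your secondary worry about idempotent completion, by contrast, is handled correctly and is harmless here: a cofinal inclusion is a $\pi_{0}$-monomorphism and an isomorphism on higher homotopy groups, so it does not change the homotopy fibre of spaces over the basepoint.
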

\begin{proof}
Let $\mathbf{w}$ be the class of equivalences in $ \mathsf{Mod}_{\mathcal{E}}$ defined as follows:  a map $P\rightarrow P^{'} $ is $\mathbf{w}$-equivalence if and only if $\mathcal{A}\otimes^{\mathbb{L}}_{\mathcal{E}}P\rightarrow \mathcal{A}\otimes^{\mathbb{L}}_{\mathcal{E}}P^{'} $ is a quasi-isomorphism ($\mathbf{q.i.}$).

The left Bousfield localization \cite{hirschhorn2009model} of the model category $\mathsf{Mod}_{\mathcal{E}}$ with respect to the class $\mathbf{w}$ exists and it is denoted by $\mathrm{L}_{\mathbf{w}} \mathsf{Mod}_{\mathcal{E}}$. Since $\mathcal{A}\otimes^{\mathbb{L}}_{\mathcal{E}}\mathcal{A}\simeq \mathcal{A}$ we obtain a Quillen equivalence 
$$\xymatrix{\mathrm{L}_{\mathbf{w}}\mathsf{Mod}_{\mathcal{E}}\ar@<2pt>[r]^-{\mathcal{A}\otimes_{\mathcal{E}}-} & \mathsf{Mod}_{\mathcal{A}}\ar@<2pt>[l]^-{U}} $$
More precisely, for any $M\in \mathsf{Mod}_{\mathcal{A}}$ the (derived) counit map $ \mathcal{A}\otimes^{\mathbb{L}}_{\mathcal{E}} U(M)\rightarrow M $
is a quasi-isomorphism (because it is a quasi-isomorphism for $\mathcal{A}=M$, the functor $\mathcal{A}\otimes_{\mathcal{E}}^{\mathbb{L}}-$ commutes with homotopy colimits and $\mathcal{A}$ is a generator for the homotopy category of $\mathsf{Mod}_{\mathcal{A}}$). On another hand, the derived unit map $P\rightarrow \mathcal{A}\otimes^{\mathbb{L}}_{\mathcal{E}} U(P)$ is an equivalence in $\mathrm{L}_{\mathbf{w}}\mathsf{Mod}_{\mathcal{E}}$ for any $P\in \mathsf{Mod}_{\mathcal{E}}$ by definition. In particular the subcategory of compact objects in $\mathrm{L}_{\mathbf{w}}\mathsf{Mod}_{\mathcal{E}}$ is equivalent to $\mathsf{Perf}(\mathcal{A}$). Thus, by \cite[theorem 3.3]{sagave2004algebraic}, we have an equivalence of the $K$-theory spaces
$$\mathrm{K}((\mathsf{Perf}(\mathcal{E}),\mathbf{w}))\simeq \mathrm{K}((\mathsf{Perf}(\mathcal{A}),\mathbf{q.i.})):= \mathrm{K}(\mathcal{A}).$$ 
By Waldhausen fundamental theorem \cite[Theorem 1.6.4]{waldhausen1985algebraic}, the sequence of Waldhausen categories   
$$ (\mathsf{Perf}(\mathcal{E})^{\mathbf{w}},\mathbf{q.i.})\rightarrow (\mathsf{Perf}(\mathcal{E}),\mathbf{q.i.})\rightarrow (\mathsf{Perf}(\mathcal{E}),\mathbf{w}) $$
induces a fibre sequence of $K$-theory spaces
$$\mathrm{K}((\mathsf{Perf}(\mathcal{E})^{\mathbf{w}},\mathbf{q.i.}))\rightarrow \mathrm{K}(\mathcal{E})\rightarrow  \mathrm{K} (\mathcal{A})$$
 where $\mathsf{Perf}(\mathcal{E})^{\mathbf{w}}$ is the full subcategory of $\mathsf{Perf}(\mathcal{E})$ such that $E\in \mathsf{Perf}(\mathcal{E})^{\mathbf{w}}$ if and only if $\mathcal{A}\otimes^{\mathbb{L}}_{\mathcal{E}}E\simeq 0$. It is obvious by definition that  $\mathsf{Perf}(\mathcal{E})^{\mathbf{w}}=\mathsf{Perf}(\mathcal{E},\mathcal{A})$.\\
Hence $$\mathrm{K}(\mathcal{E},\mathcal{A})\rightarrow \mathrm{K}(\mathcal{E})\rightarrow \mathrm{K}(\mathcal{A})$$ is a homotopy fibre sequence of spaces.
\end{proof}
A similar result can be found in \cite[Theorem 0.5]{neeman2004noncommutative} and in \cite[Lemma 5.1]{chen2012recollements}.

\section{Farrell-Jones conjecture}

\begin{notation} We fix the following notations:
\begin{enumerate}
\item $k=\mathds{F}_{2}$ is the finite field with two elements.
\item $R$ is the group algebra $k[\mathds{Q}]$, where $\mathds{Q}$ is the additive abelian group of rational numbers. 
\end{enumerate}
\end{notation}
\begin{proposition}\label{prop}
If $\mathds{V}$ is a rational vector space and $A$ is a finite abelian group then 
\[\mathrm{H}_{\ast}(\mathrm{B}\mathds{V}; \mathds{Z})=
\begin{cases}
\mathds{Z}& \quad \textrm{ if } n= 0\\
\mathds{V} & \quad \textrm{ if } n= 1\\
0 & \quad \textrm{ else } \\
\end{cases}\]
and 
\[\mathrm{H}_{\ast}(\mathrm{B}\mathds{V}; A)=
\begin{cases}
A & \quad \textrm{ if } n= 0\\
0 & \quad \textrm{ else } \\
\end{cases}\]

\end{proposition}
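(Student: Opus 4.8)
The plan is to identify $\mathrm{B}\mathds{V}$ with the Eilenberg--MacLane space $K(\mathds{V},1)$, so that for any coefficient group $M$ the homology $\mathrm{H}_{\ast}(\mathrm{B}\mathds{V};M)$ is the group homology $\mathrm{H}_{\ast}(\mathds{V};M)$. The computation will rest on two structural facts: first, that $\mathrm{B}(-)$ and singular homology commute with filtered (homotopy) colimits of abelian groups; and second, that $\mathds{V}$, being a rational vector space, is uniquely divisible, so multiplication by any nonzero integer is an automorphism of $\mathds{V}$ and hence of $\mathrm{B}\mathds{V}$.

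For the integral table I would present $\mathds{V}$ as a filtered colimit of cyclic subgroups $\mathds{V}=\mathrm{colim}_{\alpha}\,L_{\alpha}$ with each $L_{\alpha}\cong\mathds{Z}$ and transition maps given by multiplication by integers, so that $\mathrm{B}\mathds{V}\simeq\mathrm{hocolim}_{\alpha}\,\mathrm{B}L_{\alpha}\simeq\mathrm{hocolim}_{\alpha}\,S^{1}$. Since homology commutes with this filtered homotopy colimit, one gets $\mathrm{H}_{n}(\mathrm{B}\mathds{V};\mathds{Z})=\mathrm{colim}_{\alpha}\,\mathrm{H}_{n}(S^{1};\mathds{Z})$. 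The building block $\mathrm{H}_{\ast}(S^{1};\mathds{Z})$ equals $\mathds{Z}$ in degrees $0$ and $1$ and vanishes above; passing to the colimit along the multiplication maps yields $\mathds{Z}$ in degree $0$ by connectivity, the colimit $\mathrm{colim}(\mathds{Z}\to\mathds{Z}\to\cdots)=\mathds{V}$ in degree $1$, and $0$ in every degree $\geq 2$. This gives the first table.

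For the table with finite coefficients $A$ I would argue by the universal coefficient theorem, which gives a short exact sequence relating $\mathrm{H}_{n}(\mathrm{B}\mathds{V};A)$ to $\mathrm{H}_{n}(\mathrm{B}\mathds{V};\mathds{Z})\otimes_{\mathds{Z}}A$ and $\mathrm{Tor}_{1}^{\mathds{Z}}(\mathrm{H}_{n-1}(\mathrm{B}\mathds{V};\mathds{Z}),A)$. By the first part, $\mathrm{H}_{n}(\mathrm{B}\mathds{V};\mathds{Z})$ is a $\mathds{Q}$-vector space for every $n\geq 1$; since such a group is uniquely divisible and torsion free while $A$ is finite torsion, both $(-)\otimes_{\mathds{Z}}A$ and $\mathrm{Tor}_{1}^{\mathds{Z}}(-,A)$ vanish on it. Hence the only surviving contribution is $\mathrm{H}_{0}(\mathrm{B}\mathds{V};\mathds{Z})\otimes_{\mathds{Z}}A=\mathds{Z}\otimes_{\mathds{Z}}A=A$ in degree $0$, and $\mathrm{H}_{n}(\mathrm{B}\mathds{V};A)=0$ for $n\geq 1$, which is the second table.

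The step that carries the weight, and where I expect the real work to lie, is the passage to the filtered colimit in the integral computation: one must exhibit $\mathds{V}$ as a filtered system of cyclic groups with the correct transition maps, identify $\mathrm{B}\mathds{V}$ with the associated homotopy colimit of circles, and verify that singular homology commutes with it. Once that presentation is secured the remaining colimit computations and the universal-coefficient bookkeeping are routine, and it is precisely the unique divisibility of $\mathds{V}$ that forces the collapse of the finite-coefficient homology onto degree $0$.
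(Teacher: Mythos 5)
The paper states Proposition \ref{prop} without any proof at all (it is treated as a standard fact), so there is no argument of the paper to compare yours against; your attempt has to stand on its own. On its own terms, your universal-coefficient argument for the second table is sound, and your telescope argument for the first table is correct exactly in the case the paper actually uses, namely $\mathds{V}=\mathds{Q}$ (or any $\mathds{V}$ with $\dim_{\mathds{Q}}\mathds{V}\leq 1$): one writes $\mathds{Q}=\mathrm{colim}\,(\mathds{Z}\xrightarrow{2}\mathds{Z}\xrightarrow{3}\mathds{Z}\rightarrow\cdots)$, uses that $\mathrm{B}(-)$ and singular homology commute with filtered (homotopy) colimits, and reads off $\mathds{Z}$, $\mathds{Q}$, $0,0,\dots$ from $\mathrm{H}_{\ast}(S^{1};\mathds{Z})$.

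However, the step you yourself flag as carrying the weight is genuinely impossible for a general rational vector space, and in fact the first table is \emph{false} in general. A filtered colimit of groups isomorphic to $\mathds{Z}$ always has torsion-free rank at most one: any two elements lift to a common stage $L_{\alpha}\cong\mathds{Z}$, where they satisfy a nontrivial $\mathds{Z}$-linear relation, and that relation persists in the colimit; hence $\mathds{Q}^{2}$ admits no such presentation. Correspondingly, $\mathrm{H}_{2}(\mathrm{B}(\mathds{Q}^{2});\mathds{Z})\cong\mathds{Q}\neq 0$, as one sees from the K\"unneth theorem applied to $\mathrm{B}\mathds{Q}\times\mathrm{B}\mathds{Q}$, or from the classical fact that $\mathrm{H}_{\ast}(\mathrm{B}G;\mathds{Z})\cong\Lambda^{\ast}_{\mathds{Z}}G$ for torsion-free abelian $G$. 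The correct general statement is $\mathrm{H}_{n}(\mathrm{B}\mathds{V};\mathds{Z})\cong\Lambda^{n}\mathds{V}$, which collapses to the stated table only in rank $\leq 1$. Two consequences for your write-up: (i) the first part should either be restricted to one-dimensional $\mathds{V}$ (which is all the paper needs, since the proposition is only applied to $\mathds{V}=\mathds{Q}$) or replaced by the exterior-algebra formula; (ii) your proof of the second table, whose conclusion \emph{is} true for every rational vector space, should not derive the statement ``$\mathrm{H}_{n}$ is a $\mathds{Q}$-vector space for $n\geq 1$'' from the first table, since that table is false in general — instead quote that $\Lambda^{n}\mathds{V}$ is a $\mathds{Q}$-vector space, so both $-\otimes_{\mathds{Z}}A$ and $\mathrm{Tor}_{1}^{\mathds{Z}}(-,A)$ vanish on it, and the universal-coefficient argument then goes through unchanged.
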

\begin{lemma}\label{F-J}
 
\[\pi_{n}\mathrm{K}(R):=\mathrm{K}_{n}(R)=
\begin{cases}
\mathrm{K}_{n}(k) & \quad \textrm{ if } n\neq 1\\
 \mathds{Q} & \quad \textrm{ if } n=1 \\
\end{cases}\]
\end{lemma}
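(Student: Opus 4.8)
The plan is to read this off from the Farrell--Jones isomorphism applied to the pair $(k,G) = (\mathds{F}_{2},\mathds{Q})$. Since $k=\mathds{F}_{2}$ is a field it is in particular regular, and $\mathds{Q}$ is a torsionfree abelian group, so the theorem of Wegner quoted just after Conjecture \ref{1} applies and the assembly map
$$ \mathrm{H}_{n}\bigl(\mathrm{B}\mathds{Q}; \mathbf{K}(k)\bigr) \xrightarrow{\ \cong\ } \mathrm{K}_{n}(k[\mathds{Q}]) = \mathrm{K}_{n}(R) $$
is an isomorphism for every $n\in\mathds{Z}$. This reduces the lemma to the computation of the generalized homology groups $\mathrm{H}_{n}(\mathrm{B}\mathds{Q}; \mathbf{K}(k))$, i.e. of the stable homotopy groups of $\mathrm{B}\mathds{Q}_{+}\wedge\mathbf{K}(k)$.

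To compute these I would run the Atiyah--Hirzebruch spectral sequence
$$ E^{2}_{p,q} = \mathrm{H}_{p}\bigl(\mathrm{B}\mathds{Q}; \mathrm{K}_{q}(k)\bigr) \Longrightarrow \mathrm{H}_{p+q}\bigl(\mathrm{B}\mathds{Q}; \mathbf{K}(k)\bigr), $$
feeding in two inputs: Quillen's computation of $\mathrm{K}_{\ast}(\mathds{F}_{2})$ and Proposition \ref{prop}. Quillen's theorem gives $\mathrm{K}_{0}(\mathds{F}_{2})=\mathds{Z}$, $\mathrm{K}_{1}(\mathds{F}_{2})=\mathds{F}_{2}^{\times}=0$, $\mathrm{K}_{2i}(\mathds{F}_{2})=0$ and $\mathrm{K}_{2i-1}(\mathds{F}_{2})=\mathds{Z}/(2^{i}-1)$ for $i\geq 1$, while the negative $\mathrm{K}$-groups vanish by regularity; the key qualitative feature is that $\mathrm{K}_{q}(\mathds{F}_{2})$ is a \emph{finite} abelian group for every $q\geq 1$. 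Since $\mathds{Q}$ is a one-dimensional rational vector space, Proposition \ref{prop} evaluates every row of the $E^{2}$-page at once: the row $q=0$ (integral coefficients) contributes $E^{2}_{0,0}=\mathds{Z}$ and $E^{2}_{1,0}=\mathds{Q}$, whereas for each $q\geq 1$ the finite coefficient group $\mathrm{K}_{q}(\mathds{F}_{2})$ gives $E^{2}_{0,q}=\mathrm{K}_{q}(\mathds{F}_{2})$ and $E^{2}_{1,q}=0$. In all cases $E^{2}_{p,q}=0$ for $p\geq 2$.

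Thus the whole $E^{2}$-page is supported in the columns $p=0$ and $p=1$, and the only nonzero entry off the vertical axis is $E^{2}_{1,0}=\mathds{Q}$. Each differential $d^{r}$ with $r\geq 2$ decreases $p$ by $r\geq 2$, so it lands outside this support and vanishes; the spectral sequence therefore collapses at $E^{2}$. The one genuine point to check is the absence of nontrivial extensions, and this is where I expect the only real care to be needed. For $n\neq 1$ the sole surviving term in total degree $n$ is $E^{\infty}_{0,n}=\mathrm{K}_{n}(\mathds{F}_{2})=\mathrm{K}_{n}(k)$ (equal to $\mathds{Z}$ for $n=0$ and to $0$ for $n<0$), so no extension issue arises. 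For $n=1$ the associated graded of $\mathrm{H}_{1}$ has exactly the two pieces $E^{\infty}_{0,1}=\mathrm{K}_{1}(\mathds{F}_{2})=0$ and $E^{\infty}_{1,0}=\mathds{Q}$; the vanishing of the first forces $\mathrm{H}_{1}(\mathrm{B}\mathds{Q};\mathbf{K}(k))=\mathds{Q}$. Combining these with the Farrell--Jones isomorphism yields the asserted values of $\mathrm{K}_{n}(R)$. It is worth emphasizing that the vanishing $\mathrm{K}_{1}(\mathds{F}_{2})=\mathds{F}_{2}^{\times}=0$ is exactly what makes the degree-one group come out to be $\mathds{Q}$ on the nose, which is the reason for working over the specific field $\mathds{F}_{2}$.
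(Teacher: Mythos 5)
Your argument is essentially the paper's own proof: both invoke Wegner's theorem to validate the assembly map for $(\mathds{F}_{2},\mathds{Q})$, feed Quillen's computation of $\mathrm{K}_{\ast}(\mathds{F}_{2})$ and Proposition \ref{prop} into the Atiyah--Hirzebruch spectral sequence, and observe that the $E^{2}$-page is concentrated in columns $p=0,1$ so the sequence collapses. Your explicit check that no extension problem arises in total degree $1$ is a small point the paper leaves implicit, but the route is the same.
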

\begin{proof}
By Quillen theorem \cite{quillen1972cohomology}, the algebraic $K$-theory of the finite field $k$ is given by
\[\mathrm{K}_{n}(k)=
\begin{cases}
\mathds{Z} & \quad \text{ if } n=0 \\
0 & \quad \text{ if } n \text{ even } >0 \\
 \mathds{Z}/(2^{j}-1) & \quad \text{ if } n=2j-1 \text{ and } j>0 \\
\end{cases}
\]
Since $\mathds{Q}$ is a rational vector space and $\mathrm{K}_{n}(k)$ are finite abelian groups (for $n>0$) then by proposition \ref{prop} we have that
\[\mathrm{H}_{p}(\mathrm{B}\mathds{Q}; \mathrm{K}_{q}(k))=
\begin{cases}
\mathds{Q} & \quad \text{ if } p=1 \text{ and } q=0\\
\mathrm{K}_{q}(k) & \quad \text{ if } p=0 \text{ and } q\geq 0 \\
0 & \quad \text{ else }\\
\end{cases}
\]
The second page $E^{2}_{p,q}=\mathrm{H}_{p}(\mathrm{B}\mathds{Q}; \mathrm{K}_{q}(k))$ of the converging Atiyah-Hirzebruch spectral sequence \cite{luck2010k} 
$$\mathrm{H}_{p}(\mathrm{B}\mathds{Q}; \mathrm{K}_{q}(k))  \Longrightarrow\mathrm{H}_{p+q}(\mathrm{B}\mathds{Q}; \mathbf{K}(k))$$
has graphically the following shape:\\
\begin{center}
\begin{tikzpicture}
  \matrix (m) [matrix of math nodes,
    nodes in empty cells,nodes={minimum width=5ex,
    minimum height=5ex,outer sep=-5pt},
    column sep=1ex,row sep=1ex]{
    \vdots     &  \vdots  &      0        &   0   &   0         &\dots &  0  &\dots\\
    q     &  |[draw,red,circle]|\mathrm{K}_{q}(k)  & 0             &   0   &   0         &\dots &  0  &\dots\\
          \vdots     &  \vdots  & \vdots             & \vdots  & \vdots              &  \cdots & \vdots &\dots  \\
          5     &  |[draw,red,circle]|\mathds{Z}/(7) &  0          &   0 &   0   & \dots & 0  &\dots\\
          4     &  0 &  0          &   0 &   0   & \dots & 0  &\dots\\
          3     &  |[draw,red,circle]|\mathds{Z}/(3) &  0          &   0 &   0   & \dots & 0  &\dots\\
          2     &  0 &  0          &  |[draw,circle]|0  &   0   & \dots &0  &\dots\\
          1     &  0 &  0         &   0   &   0 & \dots &0  &\dots\\
          0     & |[draw,red,circle]| \mathds{Z} & |[draw,red,circle]|\mathds{Q} &  0   &   0 & \dots &0  &\dots\\
    \quad\strut &   0  &  1  &  2  & 3 &    \dots & p &\dots\strut\\ };
\draw[ultra thick] (m-1-1.north east) -- (m-10-1.east) ;
\draw[ultra thick] (m-10-1.north) -- (m-10-8.north) ;
\draw[blue,ultra thick,->] (m-7-4.south) -- (m-6-2.north);
\end{tikzpicture}\\
\end{center}
where the differentials $d^{2}: E_{p,q}^{2}\rightarrow E_{p-2,q+1}^{2}$ are obviously identical to $0$. It means that the spectral sequence collapses, hence in our particular case it implies that
$$ \mathrm{H}_{p}(\mathrm{B}\mathds{Q}; \mathrm{K}_{q}(k))= \mathrm{H}_{p+q}(\mathrm{B}\mathds{Q}; \mathbf{K}(k)).$$
Since the Farrell-Jones conjecture is true in the case of torsionfree abelian groups \cite{wegner2015farrell}, we obtain that 
\[\mathrm{K}_{n}(R)\cong\mathrm{H}_{n}(\mathrm{B}\mathds{Q}; \mathbf{K}(k))=
\begin{cases}
    \mathrm{K}_{n}(k)  & \quad \text{if } n\neq 1 \\
    \mathds{Q} & \quad \text{ if } n=1\\
   \end{cases}
\]
\end{proof}

\begin{lemma}\label{lemma}
There is a fibre sequence of Waldhausen $K$-theory spaces given by 
$$ \mathrm{K}(R,k)\rightarrow \mathrm{K}(R)\rightarrow  \mathrm{K}(k)$$
\end{lemma}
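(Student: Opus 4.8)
The plan is to apply Lemma \ref{main} to the augmentation map $\varepsilon\colon R=k[\mathds{Q}]\to k$, taking $\mathcal{E}=R$ and $\mathcal{A}=k$. Once the hypothesis of that lemma is verified, it produces verbatim the asserted fibre sequence $\mathrm{K}(R,k)\to\mathrm{K}(R)\to\mathrm{K}(k)$, with $\mathrm{K}(R,k)=\mathrm{K}(\mathsf{Perf}(R,k))$ in the sense of Notation \ref{notaion}. So the whole content of the lemma reduces to checking the single condition $k\otimes^{\mathbb{L}}_{R}k\simeq k$.

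To verify this, the key step is to identify the homotopy type of the derived tensor product with group homology. Since $k$ carries the trivial $\mathds{Q}$-action coming from $\varepsilon$, the classical identification of $\mathrm{Tor}$ over a group algebra with group homology gives
$$\mathrm{H}_{*}\left(k\otimes^{\mathbb{L}}_{k[\mathds{Q}]}k\right)\cong\mathrm{Tor}^{k[\mathds{Q}]}_{*}(k,k)\cong\mathrm{H}_{*}(\mathds{Q};k)\cong\mathrm{H}_{*}(\mathrm{B}\mathds{Q};\mathds{F}_{2}).$$
Now Proposition \ref{prop} applies directly with $\mathds{V}=\mathds{Q}$ and $A=\mathds{F}_{2}$: as $\mathds{Q}$ is a rational vector space and $\mathds{F}_{2}$ is a finite abelian group, this homology equals $\mathds{F}_{2}$ in degree $0$ and vanishes otherwise. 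Concretely, the vanishing in degree $1$ reflects the fact that $\mathds{Q}$ is $2$-divisible, so $\mathds{Q}\otimes_{\mathds{Z}}\mathds{F}_{2}=0$; it is exactly here that the specific choices $k=\mathds{F}_{2}$ and $G=\mathds{Q}$ are used.

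Since the homology of $k\otimes^{\mathbb{L}}_{R}k$ is concentrated in degree $0$ with value $k$, and the natural multiplication map $k\otimes^{\mathbb{L}}_{R}k\to k$ induces an isomorphism on $\mathrm{H}_{0}$ (because $\mathrm{H}_{0}=k\otimes_{R}k=k$ via $\varepsilon$), this map is a quasi-isomorphism. Hence $k\otimes^{\mathbb{L}}_{R}k\simeq k$, and the hypothesis of Lemma \ref{main} is satisfied.

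With the hypothesis in hand, Lemma \ref{main} immediately yields the desired homotopy fibre sequence. I expect the only real obstacle to be making fully precise the identification $\mathrm{Tor}^{k[\mathds{Q}]}_{*}(k,k)\cong\mathrm{H}_{*}(\mathrm{B}\mathds{Q};\mathds{F}_{2})$ together with its degree-$0$ compatibility with the multiplication map; once that is settled, everything else is a formal application of Proposition \ref{prop} and Lemma \ref{main}.
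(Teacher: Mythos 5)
Your proposal is correct and follows essentially the same route as the paper: verify $k\otimes^{\mathbb{L}}_{R}k\simeq k$ by identifying $\mathrm{Tor}^{R}_{*}(k,k)$ with $\mathrm{H}_{*}(\mathrm{B}\mathds{Q};\mathds{F}_{2})$ and invoking Proposition \ref{prop}, then apply Lemma \ref{main} with $\mathcal{E}=R$ and $\mathcal{A}=k$. Your added remarks on the degree-$0$ compatibility of the multiplication map and the role of $2$-divisibility are just slightly more explicit versions of what the paper leaves implicit.
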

\begin{proof}
Since $k$ is a finite field (in particular a finite abelian group) and $\mathds{Q}$ is a rational vector space, it follows by \ref{prop} that 

\[\mathrm{H}_{n}(\mathrm{B}\mathds{Q};k)=\mathrm{Tor}^{R}_{n}(k,k)=\begin{cases}
    k & \quad \text{if } n=0 \\
    0 & \quad \text{ else } \\
   \end{cases}
\]
therefore $k\otimes_{R}^{\mathbb{L}}k\simeq k$. The conclusion follows from lemma \ref{main} when $k=\mathcal{A}$ and $R=\mathcal{E}$. 
\end{proof}

\begin{theorem}\label{theorem1}
With the same notation, the $K$-theory space of the thick subcategory $\mathsf{Perf}(R,k)$ is equivalent to the Eilenberg-MacLane space $\mathrm{B}\mathds{Q}$.
\end{theorem}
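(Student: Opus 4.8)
The plan is to read off the homotopy type of $\mathrm{K}(R,k)$ directly from the long exact sequence in homotopy groups attached to the fibre sequence
$$\mathrm{K}(R,k)\to \mathrm{K}(R)\to \mathrm{K}(k)$$
produced in Lemma \ref{lemma}. Writing $F:=\mathrm{K}(R,k)$ and $f\colon \mathrm{K}(R)\to\mathrm{K}(k)$ for the map induced by the augmentation $R\to k$, I would first pin down $f$ on homotopy groups and then extract $\pi_{\ast}F$ from the exact sequence.

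First I would observe that $f$ admits a splitting. The composite $k\to R\to k$ of the unit of the group algebra with the augmentation is the identity, so on $K$-theory spaces the composite $\mathrm{K}(k)\to\mathrm{K}(R)\xrightarrow{f}\mathrm{K}(k)$ is the identity; hence $f_{n}:=\pi_{n}(f)$ is split surjective for every $n$. Combining this with the explicit computation of Lemma \ref{F-J}, which gives $\mathrm{K}_{n}(R)\cong \mathrm{K}_{n}(k)$ for $n\neq 1$ and $\mathrm{K}_{1}(R)=\mathds{Q}$, I would conclude: for $n\neq 1$ the map $f_{n}$ is a surjective endomorphism of $\mathrm{K}_{n}(k)$, which is either $\mathds{Z}$ (in degree $0$) or a finite abelian group (in positive degrees, by Quillen), so $f_{n}$ is an isomorphism; while for $n=1$ one has $\mathrm{K}_{1}(k)=\mathds{Z}/(2-1)=0$, so $f_{1}\colon\mathds{Q}\to 0$ is the zero map.

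With $f$ understood, I would feed this into the exact sequence
$$\cdots\to\pi_{n+1}\mathrm{K}(k)\to\pi_{n}F\to\pi_{n}\mathrm{K}(R)\xrightarrow{f_{n}}\pi_{n}\mathrm{K}(k)\to\cdots,$$
which presents $\pi_{n}F$ as an extension of $\ker f_{n}$ by $\operatorname{coker}f_{n+1}$. Since $f_{m}$ is an isomorphism for every $m\neq 1$, both $\ker f_{n}$ and $\operatorname{coker}f_{n+1}$ vanish whenever $n\neq 1$ and $n+1\neq 1$, giving $\pi_{n}F=0$ for $n\geq 2$; the same conclusion holds in degree $0$ since $\operatorname{coker}f_{1}=0$ (as $\mathrm{K}_{1}(k)=0$) and $\ker f_{0}=0$. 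In degree $1$ only $\ker f_{1}=\mathds{Q}$ survives, because $f_{2}$ is an isomorphism, so $\pi_{1}F=\mathds{Q}$.

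Finally, $F=\mathrm{K}(R,k)$ is a connected infinite loop space by Lemma \ref{main}, in particular a simple space whose fundamental group is abelian, and the computation above shows $\pi_{1}F=\mathds{Q}$ with all other homotopy groups trivial. A connected space with $\pi_{i}=0$ for $i\neq 1$ is precisely an Eilenberg-MacLane space $K(\mathds{Q},1)$, which is the classifying space $\mathrm{B}\mathds{Q}$ of the discrete abelian group $\mathds{Q}$. I expect the only delicate point to be the identification of $f$ on homotopy groups, especially the compatibility of the augmentation-induced map with the abstract isomorphisms of Lemma \ref{F-J} in degree $0$; the splitting argument is what makes this step clean, reducing everything to the elementary fact that a surjective endomorphism of $\mathds{Z}$ or of a finite abelian group is bijective.
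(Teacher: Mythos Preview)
Your proof is correct and follows the same route as the paper: extract $\pi_{\ast}\mathrm{K}(R,k)$ from the long exact sequence attached to the fibre sequence of Lemma~\ref{lemma}, feeding in the computation of Lemma~\ref{F-J}. The paper's argument is terser and simply asserts the outcome of the long exact sequence; your use of the section $k\to R$ to force $f_{n}$ to be an isomorphism for $n\neq 1$ makes explicit a step the paper leaves to the reader.
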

\begin{proof}

Since the Farrell-Jones conjecture is true for $G=\mathds{Q}$. Combining lemma \ref{lemma} and lemma \ref{F-J}, we have by Serre's long exact sequence that the homotopy groups of the homotopy fibre $\mathrm{K}(R,k)$ of 
$\mathrm{K}(R)\rightarrow \mathrm{K}(k)$ are given by 
 \[\mathrm{K}_{n}(R,k)=\begin{cases}
    \mathds{Q} & \quad \text{if } n=1 \\
    0 & \quad \text{ else } \\
   \end{cases}
\]
and by definition $\mathrm{K}(R,k):=\mathrm{K}(\mathsf{Perf}(R,k))$, hence we have proved the main theorem \ref{maint}. 
\end{proof}

 \bibliographystyle{plain}
\bibliography{FJEMK}
\end{document}